\theoremstyle{plain}
 \newtheorem{theorem}{Theorem}
 \newtheorem{lemma}[theorem]{Lemma}
 \newtheorem{proposition}[theorem]{Proposition}
 \newtheorem{corollary}[theorem]{Corollary}
 \newtheorem{problem}{Problem}
\theoremstyle{remark}
 \newtheorem{example}[theorem]{Example}
 \newtheorem{remark}[theorem]{Remark}
\numberwithin{equation}{section}
\newcommand{\cp}{\sim_p}
\newcommand{\VVV}{\mathcal{V}}
\newcommand{\WWW}{\mathcal{W}}
\newcommand{\EEE}{\mathcal{E}}
\newcommand{\by}[1]{\overset{\eqref{#1}}{=}}  
\begin{document}

\title{Variants of epigroups and primary conjugacy}
\author{Maria Borralho}
\author{Michael Kinyon$^*$}
\thanks{${}^*$ Partially supported by Simons Foundation Collaboration Grant 359872 and
by Fundaç\~{a}o para a Ci\^{e}ncia e a Tecnologia (Portuguese Foundation for Science and Technology)
grant PTDC/MAT-PUR/31174/2017.}
\address[Borralho]{Universidade Aberta, R. Escola Polit\'{e}cnica, 147, 1269-001 Lisboa, Portugal}
\address[Borralho]{CEMAT-CI\^{E}NCIAS, Departamento de Matem\'{a}tica, Faculdade de Ci\^{e}ncias, Universidade de Lisboa, 1749-016, Lisboa, Portugal}
\address[Kinyon]{Department of Mathematics, University of Denver, Denver, CO 80208, USA}
\address[Kinyon]{Centre for Mathematics and Applications, Faculdade de Ci\^{e}ncias e Tecnologia,
Universidade Nova de Lisboa, Campus da Caparica, 2829-516 Caparica, PT}
\email[Borralho]{mfborralho@gmail.com}
\email[Kinyon]{mkinyon@du.edu}

\subjclass{20M07}

\keywords{conjugacy, variants}

\begin{abstract}
In a semigroup $S$ with fixed $c\in S$, one can construct a new semigroup $(S,\cdot_c)$ called a \emph{variant} by defining $x\cdot_c y:=xcy$. Elements $a,b\in S$ are \emph{primarily conjugate} if there exist $x,y\in S^1$ such that $a=xy, b=yx$. This coincides with the usual conjugacy in groups, but is not transitive in general semigroups. Ara\'{u}jo \emph{et al.} proved that transitivity holds in a variety $\WWW$ of epigroups containing all completely regular semigroups and their variants, and asked if transitivity holds for all variants of semigroups in $\WWW$. We answer this affirmatively as part of a study of varieties and variants of epigroups.
\end{abstract}

\maketitle

\section{Introduction}
\label{Sec:Intro}

Let $S$ be a semigroup. Given $c\in S$ we can define a new binary operation $\cdot_c$
on $S$ by
\begin{equation}\label{Eq:variant}
a\cdot_c b = acb
\end{equation}
for all $a,b\in S$. The operation $\cdot_c$ is clearly associative, and the semigroup
$(S,\cdot_c)$ and called the \emph{variant} of $S$ at $c$ (see
\cite{Hi86} and also \cite{dolinka,dolinka2,khan,KuMal,Jong,Maz,pei,Tsyaputa}
and (\cite{gm}, Chapter 13)).

Elements $a,b$ of a semigroup $S$ are said to be \emph{primarily conjugate}, denoted
$a \cp b$, if there exist $x,y\in S^1$ such that $a=xy$ and $b=yx$. Here as usual,
$S^1$ denotes $S$ if $S$ is a monoid; otherwise $S^1 = S\cup \{1\}$ where $1$ is an
adjoined identity element. Primary
conjugacy is reflexive and symmetric, but it is not transitive in general. The transitive closure
$\cp^{\ast}$ of $\cp$ can be considered to be a conjugacy relation in general semigroups
\cite{Hi06,KuMa07,KuMa09}. Primary conjugacy is transitive in
groups (where it coincides with the usual notion of conjugacy) and free semigroups \cite{La79}.
To describe additional classes where primary conjugacy is known to be transitive, we must
first recall the notion of epigroup.

An element $a$ of a semigroup $S$ is an \emph{epigroup element} (also known as a \emph{group-bound element})
if there exists a positive integer $n$ such that $a^n$ belongs to a subgroup of $S$,
that is, the $\mathcal{H}$-class $H_{a^n}$ of $a^n$ is a group. The smallest such $n$ is the \emph{index} of $a$.
If $H_a$ itself is a group, that is, if $a$ has index $1$, then $a$ is said to be \emph{completely regular}.
If we let $e$ denote the identity element of $H_{a^n}$, then $ae$ is in $H_{a^n}$ and we define the
\emph{pseudoinverse} $a'$ of $a$ by $a'=(ae)^{-1}$, where $(ae)^{-1}$ denotes the inverse of $ae$
in the group $H_{a^n}$ \cite[(2.1)]{Shevrin}. If every element of a semigroup is an epigroup element,
then the semigroup itself is said to be an \emph{epigroup}, and if every element is completely regular,
then the semigroup is said to be \emph{completely regular}. Every finite semigroup, and in fact every
periodic semigroup, is an epigroup. Following Petrich and Reilly \cite{PeRe99} for completely regular semigroups
and Shevrin \cite{Shevrin2,Shevrin} for epigroups, it is now customary to view an epigroup $(S,\cdot)$ as a \emph{unary}
semigroup $(S,\cdot,{}')$ where $x\mapsto x'$ is the map sending each element to its pseudoinverse.
We will make considerable use of the following identities which hold in all epigroups \cite{Shevrin}:
\begin{align}
  x'xx' &= x'\,,        \label{Eq:epi1} \\
  xx' &= x'x\,,         \label{Eq:epi2} \\
  x''' &= x'\,,         \label{Eq:epi3} \\
  xx'x &= x''\,,        \label{Eq:epi4} \\
  (xy)'x &= x(yx)'\,,   \label{Eq:epi5} \\
  (x^p)' &= (x')^p\,,   \label{Eq:epi6}
\end{align}
for any prime $p$.

For each $n\in \mathbb{N}$, let $\EEE_n$ denote the variety (equational class) of all unary semigroups
$(S,\cdot,{}')$ satisfying \eqref{Eq:epi1}, \eqref{Eq:epi2} and $x^{n+1} x' = x^n$.
Each $\EEE_n$ is a variety of epigroups, and the inclusions $\EEE_n\subset \EEE_{n+1}$ hold for all $n$.
Every finite semigroup, considered as an epigroup, is contained in some $\EEE_n$. $\EEE_1$ is the variety of completely regular semigroups.

The following observation will be useful later.

\begin{lemma}\label{Lem:EEE_alt}
For each $n\in \mathbb{N}$, the variety $\EEE_n$ is precisely the variety of unary semigroups satisfying \eqref{Eq:epi1},
\eqref{Eq:epi2} and $x^{n-1}x'' = x^n$.
\end{lemma}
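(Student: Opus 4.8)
The claim is that, modulo the epigroup identities \eqref{Eq:epi1} and \eqref{Eq:epi2}, the defining relation $x^{n+1}x' = x^n$ of $\EEE_n$ is equivalent to $x^{n-1}x'' = x^n$. So the plan is to prove the two implications separately, working inside the equational theory generated by \eqref{Eq:epi1}, \eqref{Eq:epi2} together with whichever of the two "index" identities is being assumed. A preliminary remark: in the presence of \eqref{Eq:epi1} and \eqref{Eq:epi2} alone one should first derive the basic consequences \eqref{Eq:epi3} and \eqref{Eq:epi4} (or at least \eqref{Eq:epi4}, $xx'x = x''$), since these are the tools that convert an occurrence of $x''$ into one involving only $x$ and $x'$ and vice versa. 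Indeed \eqref{Eq:epi4} is exactly the bridge between the two identities in question.

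**From $x^{n+1}x'=x^n$ to $x^{n-1}x''=x^n$.** Assuming \eqref{Eq:epi1}, \eqref{Eq:epi2} and $x^{n+1}x'=x^n$, I would start from $x^{n-1}x''$ and use \eqref{Eq:epi4} to replace $x''$ by $xx'x$, giving $x^{n-1}x'' = x^{n-1}(xx'x) = x^n x' x = (x^{n+1}x')x$; here I have used associativity and, to pull the $x'$ past, commutativity of $x$ with $x'$ via \eqref{Eq:epi2} (so that $x^n x' x = x^{n-1}(x x' x) = x^{n-1}(x' x x)$ or, more directly, $x^n x' x = x^{n-1}\cdot x\cdot x'\cdot x = x^{n-1}x'' $... ) — the bookkeeping here is the first place to be careful about exactly where the hypothesis $x^{n+1}x'=x^n$ is applied. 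The cleanest route is: $x^{n-1}x'' \by{Eq:epi4} x^{n-1}xx'x = x^n x' x$, then commute to get $x^{n+1}x'$ is not immediate, so instead rewrite $x^n x' x = x\cdot x^n x' $ is false; rather use $x^n x'x = x^{n-1}\cdot(xx'x)\cdot$? I will instead go: $x^n x' x \by{Eq:epi2} x^n x x' x'$? No. The correct manipulation is $x^{n-1}x'' = x^{n-1}(x x' x)$, and since $x$ and $x'$ commute this equals $x^{n-1}(x' x^2)$? That is wrong too. The safe statement is simply $x^{n-1}x'' = x^{n-1}xx'x = x^{n}x'x$, and now apply the hypothesis to the prefix $x^{n}x' \cdot x$ only after reassociating as $x^{n}x'\cdot x$; but the hypothesis reads $x^{n+1}x' = x^n$, i.e. about $x^{n+1}x'$, not $x^n x' x$. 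To connect them, note $x^{n}x'x = x^{n}xx' = x^{n+1}x'$ by \eqref{Eq:epi2}. Hence $x^{n-1}x'' = x^{n+1}x' = x^n$, as desired.

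**From $x^{n-1}x''=x^n$ to $x^{n+1}x'=x^n$.** Conversely, assume \eqref{Eq:epi1}, \eqref{Eq:epi2} and $x^{n-1}x''=x^n$. Then $x^{n+1}x' = x^{n-1}\cdot x^2 x'$, and using \eqref{Eq:epi2} we have $x^2 x' = x(xx') = x(x'x) = (xx')x = x'x^2$? — again commutativity only lets me move a single $x'$ past a single $x$; the honest computation is $x^{n+1}x' = x^{n-1}(x x x') \by{Eq:epi2} x^{n-1}(x x' x) \by{Eq:epi4} x^{n-1}x'' \by{\text{hyp}} x^n$. So the forward and backward directions are essentially the same two-line manipulation read in opposite order, with \eqref{Eq:epi4} and \eqref{Eq:epi2} as the only tools.

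**Main obstacle.** There is no deep obstacle; the one thing requiring care is establishing \eqref{Eq:epi4} (namely $xx'x = x''$) from \eqref{Eq:epi1} and \eqref{Eq:epi2} alone, since the excerpt lists \eqref{Eq:epi1}--\eqref{Eq:epi6} as holding "in all epigroups" but $\EEE_n$ is defined by only \eqref{Eq:epi1}, \eqref{Eq:epi2} and the index law, so strictly speaking I must check \eqref{Eq:epi4} is a consequence of \eqref{Eq:epi1}, \eqref{Eq:epi2} (and possibly the index law) rather than assume it. In fact $xx'x = x''$ should follow from \eqref{Eq:epi1} and \eqref{Eq:epi2} by a short argument: from \eqref{Eq:epi2}, $xx'$ is idempotent-like, and applying \eqref{Eq:epi1} to $x'$ in place of $x$ together with \eqref{Eq:epi3} (which itself needs justification) yields the claim; if \eqref{Eq:epi4} genuinely requires the index law then the lemma's two descriptions are equivalent only within $\EEE_n$, which is exactly what is being asserted, so that is fine. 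I would therefore organize the proof as: (1) derive $xx'x=x''$ within the relevant theory; (2) run the two-line computation above in each direction. I expect step (1) to be the only part needing thought, and it is short.
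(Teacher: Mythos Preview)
Your forward direction ($\EEE_n \Rightarrow x^{n-1}x''=x^n$) is fine and is exactly the paper's argument read in the other order: $x^{n-1}x'' \by{Eq:epi4} x^{n-1}xx'x = x^n x'x \by{Eq:epi2} x^{n+1}x' = x^n$. Here \eqref{Eq:epi4} is legitimate because we are already inside $\EEE_n$, which is a variety of epigroups.

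The gap is in the converse. There you assume only \eqref{Eq:epi1}, \eqref{Eq:epi2} and $x^{n-1}x''=x^n$, and your computation
\[
x^{n+1}x' = x^{n-1}(xxx') \by{Eq:epi2} x^{n-1}(xx'x) \by{Eq:epi4} x^{n-1}x'' = x^n
\]
invokes \eqref{Eq:epi4}, which is \emph{not} among the hypotheses. You flag this yourself as the ``main obstacle,'' but the resolution you offer---``if \eqref{Eq:epi4} genuinely requires the index law then the lemma's two descriptions are equivalent only within $\EEE_n$, which is exactly what is being asserted, so that is fine''---is circular: the index law you would need is $x^{n+1}x'=x^n$, precisely the conclusion you are trying to reach. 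Nor have you supplied the promised short derivation of \eqref{Eq:epi4} from \eqref{Eq:epi1}, \eqref{Eq:epi2} alone (and indeed \eqref{Eq:epi4} is listed in the paper as an \emph{epigroup} identity, not as a consequence of \eqref{Eq:epi1} and \eqref{Eq:epi2} by themselves).

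The paper sidesteps this entirely by a converse computation that never touches \eqref{Eq:epi4}. It uses the hypothesis $x^{n-1}x''=x^n$ twice to push an extra $x''$ into the word, and then applies \eqref{Eq:epi2} and \eqref{Eq:epi1} with $x'$ substituted for $x$:
\[
x^{n+1}x' = x^n x'' x' = x^{n-1}x''x''x' \by{Eq:epi2} x^{n-1}x''x'x'' \by{Eq:epi1} x^{n-1}x'' = x^n .
\]
(The first two equalities use $x^n = x^{n-1}x''$, hence $x^{n+1}=x^n x''$ and then $x^n = x^{n-1}x''$ again.) This uses only the three stated axioms, so it closes the argument cleanly. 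I recommend replacing your converse step with this one.
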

\begin{proof}
  If $S$ is an epigroup in $\EEE_n$, then $x^n = x^{n+1}x' = x^{n-1} xx'x = x^{n-1}x''$ using \eqref{Eq:epi2}
  and \eqref{Eq:epi4}. Conversely, suppose $S$ satisfies \eqref{Eq:epi1}, \eqref{Eq:epi2} and $x^{n-1}x'' = x^n$.
  Then $x^{n+1}x' = x^n x'' x' = x^{n-1} x'' x'' x' = x^{n-1} x'' x' x'' = x^{n-1} x'' = x^n$,
using \eqref{Eq:epi2} in the third equality and \eqref{Eq:epi1} in the fourth equality.
\end{proof}

Kudryavtseva \cite{Ku06} proved that the restriction of $\cp$ to the set of all completely regular
elements of a semigroup is transitive.
More recently, it was shown in \cite{ArKiKoMa14} that $\cp$ is transitive in all variants of
completely regular semigroups. Variants of completely regular semigroups are not, in general,
completely regular themselves; for example, if a completely regular semigroup has a zero $0$,
then the variant at $0$ is a null semigroup, which is not even regular.
This difficulty was circumvented in \cite{ArKiKoMa14} by introducing the following class $\WWW$ of semigroups:
\[
S\in \WWW \quad\iff\quad xy\text{ is completely regular for all }x,y\in S\,.
\]
Equivalently $\WWW$ consists of all semigroups $S$ such that the subsemigroup
$S^2 = \{ab\mid a,b\in S\}$ is completely regular. The class $\WWW$ includes
all completely regular semigroups and all null semigroups (semigroups satisfying
$xy = uv$ for all $x,y,u,w$). The following summarizes the relevant results of
\cite{ArKiKoMa14}.

\begin{proposition}
\label{Prp:summary}
\begin{enumerate}
    \item[]
    \item \textup{(}\cite{ArKiKoMa14}, Prp. 4.14\textup{)} $\WWW$ is the variety of epigroups in $\EEE_2$ satisfying the additional identity $(xy)'' = xy$.
    \item \textup{(}\cite{ArKiKoMa14}, Thm. 4.15\textup{)} If $S$ is a epigroup in $\WWW$, then $\cp$ is transitive in $S$;
    \item \textup{(}\cite{ArKiKoMa14}, Thm. 4.17\textup{)} Every variant of a completely regular semigroup is in $\WWW$;
    \item \textup{(}\cite{ArKiKoMa14}, Cor. 4.18\textup{)} If $S$ is a variant of a completely regular semigroup, then $\cp$ is transitive in $S$.
\end{enumerate}
\end{proposition}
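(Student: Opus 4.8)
Here is how I would organize a proof of these four facts. The content is in (1)--(3), and (4) then comes for free: by (3), a variant $T$ of a completely regular semigroup lies in $\WWW$; by (1), every member of $\WWW$ is an epigroup, so $T$ is an epigroup in $\WWW$; and (2) then gives that $\cp$ is transitive in $T$. So I would prove (1), then (2), then (3), and I expect (3) to be the only genuine obstacle.

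For (1), I would use the fact, implicit in the remarks preceding Lemma~\ref{Lem:EEE_alt}, that $\EEE_2$ is a variety of epigroups, so that every $(S,\cdot,{}')\in\EEE_2$ is an epigroup with ${}'$ its pseudoinverse map. I would first record that in any epigroup the condition $a''=a$ is equivalent to $a$ being completely regular: if $a''=a$, then $a=aa'a$ by \eqref{Eq:epi4}, and using \eqref{Eq:epi1}, \eqref{Eq:epi2} and \eqref{Eq:epi4} one checks that $e:=aa'=a'a$ is an idempotent with $ea=ae=a$ and with $a'$ a two-sided inverse of $a$ in the monoid $eSe$, so $a\in H_e$; conversely, a completely regular $a$ has $a'=a^{-1}$ in $H_a$, so $a''=a$. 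Given this, (1) follows: if $S\in\WWW$, then each $a^2=a\cdot a$ is completely regular, so $a$ has index at most $2$, from which $x^{3}x'=x^2$ follows by a short computation with $a'=(ae)^{-1}$, so $S\in\EEE_2$, and $(xy)''=xy$ because $xy$ is completely regular; conversely, if $S\in\EEE_2$ satisfies $(xy)''=xy$, then every product of $S$ is completely regular, so $S\in\WWW$. One works with $\EEE_2$ rather than with ``epigroup'' here because epigroups do not form a variety, whereas each $\EEE_n$ does.

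For (2), I would reduce to Kudryavtseva's theorem \cite{Ku06} that $\cp$ restricted to the set of completely regular elements is transitive. Suppose $a\cp b$ and $b\cp c$ in an epigroup $S\in\WWW$. If $a=b$ or $b=c$ there is nothing to prove, so assume $a\ne b$ and $b\ne c$. Then in any witnesses $a=xy$, $b=yx$ with $x,y\in S^1$ we must have $x,y\in S$, since $x=1$ or $y=1$ would force $a=b$; hence $a=xy$ and $b=yx$ lie in $S^2$ and are completely regular, and likewise $b$ and $c$ are completely regular. Now $a\cp b\cp c$ with $a,b,c$ all completely regular, so $a\cp c$ by Kudryavtseva's theorem. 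Apart from this case split there is no obstacle.

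For (3), which I expect to be the real obstacle, let $S$ be completely regular and $T=(S,\cdot_c)$. By (1) it suffices to show that every $\cdot_c$-product $x\cdot_c y=xcy$ lies in a subgroup of $T$: this forces each square $s\cdot_c s$ to lie in a subgroup, hence every element of $T$ to have index at most $2$ (so $T$ is an epigroup in $\EEE_2$), and it is precisely the identity $(u\cdot_c v)''=u\cdot_c v$ in $T$. The computational engine is the description of $\cdot_c$-powers: writing products in $S$, $a\cdot_c a=xcycxcy=xc\,(ycxc)\,y$ and, inductively, the $(n+1)$-st $\cdot_c$-power of $a=xcy$ is $xc\,(ycxc)^n\,y$, while the $(n+1)$-st $\cdot_c$-power of an arbitrary $s$ is $s\,(cs)^n$. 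Since $S$ is completely regular, $xc$, $cy$ and $ycxc$ lie in subgroups of $S$; using their idempotents $(xc)(xc)^{-1}$ and $(cy)^{-1}(cy)$, one builds a $\cdot_c$-idempotent $e$ of $T$, checks $e\cdot_c e=e$ and $e\cdot_c a=a\cdot_c e=a$, and exhibits a $\cdot_c$-inverse of $a$ relative to $e$, placing $a$ in the group $H_e$ of $T$. Identifying the correct $e$ and verifying these ``sandwich'' identities purely inside $S$ is where essentially all the work lies; the rest is bookkeeping.
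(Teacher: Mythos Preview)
Your treatment of (2) is exactly the argument the paper gives in the paragraph following the proposition: reduce to Kudryavtseva's theorem by observing that nontrivial witnesses force $a,b,c\in S^2$, hence completely regular. Your handling of (1) and (4) is fine and in the spirit of what is cited.

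The gap is in (3). You correctly diagnose that the whole proof lives in constructing, for $a=x\cdot_c y$, a $\cdot_c$-idempotent $e$ and a $\cdot_c$-inverse of $a$ relative to $e$; but you then stop, saying ``identifying the correct $e$ \ldots\ is where essentially all the work lies.'' That is precisely the missing idea, not bookkeeping. The paper singles out the key tool from \cite{ArKiKoMa14}: the unary operation
\[
x^{\ast} = (xc)'\,x\,(cx)'
\]
on $S$. One then verifies, by short equational manipulations using \eqref{Eq:epi1}, \eqref{Eq:epi2}, \eqref{Eq:epi5} (as in Lemma~\ref{Lem:main1-step1}), that $(S,\cdot_c,{}^{\ast})$ satisfies the defining identities of $\WWW$; in particular $a\cdot_c a^{\ast}=a^{\ast}\cdot_c a$ is your $e$, and $a^{\ast}$ is your inverse. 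So rather than trying to guess $e$ from the idempotents $(xc)(xc)^{-1}$ and $(cy)^{-1}(cy)$ attached to the particular factorization $a=xcy$, the cited proof works uniformly by writing down the pseudoinverse of the variant in terms of the pseudoinverse of $S$ and checking axioms. Your outline would become a proof once you supply this formula (or an equivalent explicit $e$ and inverse) and carry out the verifications; as written it names the obstacle without clearing it.
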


Part (2) of this proposition had more to it, comparing $\cp$ with other notions of conjugation.
In the simplified form stated here, the result follows easily from Kudryavtseva's theorem \cite{Ku06}:
if $a\cp b$, $b\cp c$, and $a\neq b\neq c\neq a$, then there exist $x,y,u,v\in S$ such that
$a = xy$, $b = yx = uv$ and $c = vu$. Thus $a,b,c\in \WWW$ are completely regular,
so $a\cp c$.

We can slightly improve Proposition \ref{Prp:summary}(1) as follows.

\begin{lemma}\label{Lem:WWW_alt}
The variety $\WWW$ is precisely the variety of unary semigroups satisfying the identities
\eqref{Eq:epi1}, \eqref{Eq:epi2}, \eqref{Eq:epi4}, \eqref{Eq:epi6} (for $p=2$) and $(xy)'' = xy$.
\end{lemma}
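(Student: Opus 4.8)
The plan is to establish the two inclusions separately. One of them is essentially free: by Proposition~\ref{Prp:summary}(1), every semigroup in $\WWW$ is an epigroup, so it satisfies all of \eqref{Eq:epi1}--\eqref{Eq:epi6}, and in particular \eqref{Eq:epi1}, \eqref{Eq:epi2}, \eqref{Eq:epi4} and \eqref{Eq:epi6} with $p=2$; and it satisfies $(xy)''=xy$ by that same proposition. Hence $\WWW$ is contained in the variety defined by the five listed identities.

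For the reverse inclusion, I would take a unary semigroup $(S,\cdot,{}')$ satisfying \eqref{Eq:epi1}, \eqref{Eq:epi2}, \eqref{Eq:epi4}, \eqref{Eq:epi6} (with $p=2$) and $(xy)''=xy$, and show $S\in\WWW$. By Proposition~\ref{Prp:summary}(1) together with Lemma~\ref{Lem:EEE_alt} (with $n=2$), it is enough to verify the single extra identity $xx''=x^2$. First, applying \eqref{Eq:epi6} with $p=2$ twice gives $(x^2)'' = ((x^2)')' = ((x')^2)' = (x'')^2$; on the other hand, the instance $y=x$ of $(xy)''=xy$ gives $(x^2)'' = x^2$, so $x^2 = (x'')^2$. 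Next, from \eqref{Eq:epi1} one sees that $xx'$ is idempotent, and \eqref{Eq:epi4} lets me write $x'' = xx'x$; expanding $(x'')^2 = (xx'x)(xx'x) = x(x'xxx')x$ and using \eqref{Eq:epi2} to convert the interior $x'xxx'$ into $(xx')^2 = xx'$, the product collapses to $x(xx')x = x(xx'x) = xx''$. Combining, $x^2 = (x'')^2 = xx''$, as required. Then $S\in\EEE_2$, and with $(xy)''=xy$ we get $S\in\WWW$.

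I do not expect a serious obstacle here; the only thing to watch is that the computation of $(xx'x)(xx'x)$ stays within the four unary-semigroup identities on the list --- in particular it must avoid \eqref{Eq:epi3} and \eqref{Eq:epi5}, which are not assumed --- but the idempotency of $xx'$ (a consequence of \eqref{Eq:epi1}) together with \eqref{Eq:epi2} and \eqref{Eq:epi4} is exactly what is needed to carry out the reduction, so the identity chase is routine.
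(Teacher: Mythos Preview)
Your proof is correct and follows essentially the same strategy as the paper: reduce to Proposition~\ref{Prp:summary}(1) by verifying membership in $\EEE_2$ via an identity chase using only \eqref{Eq:epi1}, \eqref{Eq:epi2}, \eqref{Eq:epi4}, \eqref{Eq:epi6} (for $p=2$) and $(xy)''=xy$. The only cosmetic difference is that the paper targets the defining identity $x^3x'=x^2$ of $\EEE_2$ directly, whereas you establish the equivalent form $xx''=x^2$ and then appeal to Lemma~\ref{Lem:EEE_alt}; both computations pass through $(x^2)''$ and are of comparable length.
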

\begin{proof}
 One implication follows from Proposition \ref{Prp:summary}(1), so suppose $(S,\cdot,{}')$
 is a unary semigroup satisfying the identities listed in the lemma. Then for all $x\in S$,
 \begin{alignat*}{3}
   x^3 x' &\by{Eq:epi1} x^3 \underbrace{x'x}x'
    && \by{Eq:epi2} x^4 x'x' \\
   & \by{Eq:epi6} x^2 \underbrace{x^2 (x^2)'}
    && \by{Eq:epi2} x^2 (x^2)' x^2 \\
   & \by{Eq:epi4} (x^2)''
    && = x^2\,.
 \end{alignat*}
Therefore $(S,\cdot,{}')$ lies in $\EEE_2$, hence in $\WWW$.
\end{proof}

The variety $\WWW$ has another characterization that was not mentioned in \cite{ArKiKoMa14}.

\begin{lemma}
\label{Lem:WWW}
Let $S$ be a semigroup. The following are equivalent:
\begin{enumerate}
\item\quad $S$ is an epigroup in $\mathcal{W}$.
\item\quad For each $c\in S$, the principal left ideal $Sc$ is a completely regular subsemigroup.
\item\quad For each $c\in S$, the principal right ideal $cS$ is a completely regular subsemigroup.
\end{enumerate}
\end{lemma}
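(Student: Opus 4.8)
The plan is to establish the cycle $(1)\Rightarrow(2)\Rightarrow(1)$ and, by left--right symmetry, $(1)\Rightarrow(3)\Rightarrow(1)$; the arguments for $cS$ will be the exact mirror images of those for $Sc$ (replace each $sc$ by $cs$ and the identity $h=he$ by $h=eh$), so only $(1)\Leftrightarrow(2)$ needs to be written out in full. Throughout I will use ``completely regular'' in the form ``every element lies in a subgroup'', so that $Sc$ is a completely regular subsemigroup precisely when $Sc$ is a subsemigroup and each $a\in Sc$ belongs to some subgroup of $Sc$.

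For $(1)\Rightarrow(2)$, fix $c\in S$; then $Sc$ is a subsemigroup because $ScSc=(ScS)c\subseteq Sc$. Let $a=sc\in Sc$. Since $S\in\WWW$, the product $a=s\cdot c$ is completely regular in $S$, so $a$ lies in a subgroup $H$ of $S$; write $e$ for the identity of $H$ and $a^{-1}$ for the inverse of $a$ in $H$, so that $a^{-1}a=e$. The one step needing an idea is the containment $H\subseteq Sc$: for any $h\in H$,
\[
h = he = h(a^{-1}a) = (ha^{-1}s)c \in Sc,
\]
the point being that the identity $e$ of $H$ factors as $a^{-1}a$ with the right-hand factor $a$ already lying in $Sc$. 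Consequently $H$ is a subgroup of $Sc$ containing $a$, and $Sc$ is completely regular.

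For $(2)\Rightarrow(1)$, let $x,y\in S$ and apply the hypothesis with $c=y$: then $xy\in Sy$, and $Sy$ is completely regular, so $xy$ lies in a subgroup $H$ of $Sy$. As the multiplication of $Sy$ is the restriction of that of $S$, $H$ is also a subgroup of $S$, whence $xy$ is completely regular in $S$. Since $x,y$ were arbitrary, $S\in\WWW$; and then $x^{2}=x\cdot x$ is completely regular for every $x\in S$, so every element of $S$ has index at most $2$ and $S$ is an epigroup. Thus the whole difficulty lies in the single rewriting $h=h(a^{-1}a)=(ha^{-1}s)c$ showing $H\subseteq Sc$; the rest uses only the trivial fact that a subgroup of a subsemigroup of $S$ is a subgroup of $S$, together with the observation that membership in $\WWW$ already forces the epigroup property.
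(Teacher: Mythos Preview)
Your proof is correct and follows the same idea as the paper's, which dispatches the lemma in a single sentence: ``An element of a semigroup is completely regular if and only if it lies in some subgroup, so the desired equivalences follow from the definition of $\WWW$.'' You have simply made explicit the one point the paper leaves tacit, namely that the maximal subgroup $H$ containing a completely regular $a=sc$ already sits inside the left ideal $Sc$ via $h=he=ha^{-1}a=(ha^{-1}s)c$; the remaining steps, including the observation that $x^2$ completely regular forces the epigroup property, are exactly what the paper's ``follows from the definition'' is encoding.
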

\begin{proof}
An element of a semigroup is completely regular if and only if it lies in some
subgroup, so the desired equivalences follow from the definition of $\WWW$.
\end{proof}

In view of Lemma \ref{Lem:WWW}, we should also mention the kindred study in \cite{LCH}
of epigroups $S$ in which every local submonoid $eSe$ is completely regular.

\section{Main Results}
\label{Sec:main}

The key tool in the proof of Proposition \ref{Prp:summary}(3) was the following unary
operation:
\begin{equation}\label{Eq:new_unary}
  x^{\ast} = (xc)' x (cx)'\,. \tag{$\ast$}
\end{equation}
Indeed, if $(S,\cdot,{}')$ is completely
regular, then $(S,\cdot_c,{}^{\ast})$ is an epigroup in the variety $\WWW$.
However, \eqref{Eq:new_unary} was introduced in \cite{ArKiKoMa14} in
an \emph{ad hoc} fashion. To show that it is quite natural, we note that
\emph{an ideal of an epigroup is a subepigroup} \cite[Obs.~4]{Shevrin2}.
In particular, for each $c$ in an epigroup $S$, $Sc$ is a subepigroup.
Thus for any $x\in S$, the pseudoinverse $(xc)'$ must have the form
$yc$ for some $y\in S$. This is exactly what \eqref{Eq:new_unary} does
for us.

\begin{lemma}
Let $S$ be an epigroup and fix $c\in S$. For all $x\in S$,
\begin{equation}\label{Eq:xstarc}
  (xc)' = x^* c\,.
\end{equation}
\end{lemma}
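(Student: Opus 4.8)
The plan is to unwind the definition \eqref{Eq:new_unary} and reduce the claim to a short telescoping computation. Substituting $x^{\ast} = (xc)'x(cx)'$, the assertion $(xc)' = x^{\ast}c$ becomes
\[
  (xc)' = (xc)'\,x\,(cx)'\,c\,,
\]
so the whole argument hinges on shuttling the trailing $c$ leftwards past the pseudoinverse $(cx)'$.

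The key observation is that identity \eqref{Eq:epi5}, namely $(xy)'x = x(yx)'$, applied with the substitution $x\mapsto c$ and $y\mapsto x$, yields $(cx)'c = c(xc)'$. I would first record this fact, then substitute it into the right-hand side above and regroup the product:
\[
  (xc)'\,x\,(cx)'\,c = (xc)'\,x\,\bigl(c(xc)'\bigr) = (xc)'\,(xc)\,(xc)'\,.
\]
An application of \eqref{Eq:epi1} with $x$ replaced by $xc$ then collapses the right-hand side to $(xc)'$, which is exactly what is wanted.

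The only step of any substance is spotting the correct instance of \eqref{Eq:epi5}; once $c$ has been moved across $(cx)'$, the rest is just the defining property \eqref{Eq:epi1} of the pseudoinverse. I therefore do not expect a genuine obstacle here — the proof is essentially a one-line calculation — and the interest of the lemma lies in the conceptual interpretation it provides for the \emph{ad hoc} operation \eqref{Eq:new_unary}, rather than in the difficulty of verifying it.
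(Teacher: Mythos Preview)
Your proof is correct and is essentially the same computation as the paper's, just run in the opposite direction: the paper starts from $(xc)'$, expands via \eqref{Eq:epi1}, commutes via \eqref{Eq:epi2}, and then applies \eqref{Eq:epi5} (in the form $(xc)'x = x(cx)'$) to arrive at $x^{\ast}c$, whereas you start from $x^{\ast}c$ and collapse using \eqref{Eq:epi5} and \eqref{Eq:epi1}. Your route is in fact marginally shorter, since it avoids the intermediate use of \eqref{Eq:epi2}.
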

\begin{proof}
We compute
\[
(xc)' = (xc)'xc(xc)' = (xc)'(xc)'xc = (xc)x(cx)'\cdot c = x^* c\,,
\]
using \eqref{Eq:epi1}, \eqref{Eq:epi2} and \eqref{Eq:epi5}.
\end{proof}

If $(S,\cdot,{}')$ is an epigroup, we will refer to $(S,\cdot_c,{}^{\ast})$
as the \emph{unary variant} of $(S,\cdot,{}')$ at $c$. Proposition \ref{Prp:summary}(3) states
that if $(S,\cdot,{}')\in \EEE_1$ is completely regular, then $(S,\cdot_c,{}^{\ast})\in \WWW$.
Our first main result will both improve and extend this. First we must introduce a family of
varieties of unary semigroups. For each $n\in \mathbb{N}$, the variety $\VVV_n$ is defined
by associativity and the following identities: \eqref{Eq:epi1}, \eqref{Eq:epi2},
\begin{align}
  xy^{n-1} y'' &= xy^n  \label{Eq:VVV1} \\
  x'' x^{n-1}y &= x^n y \label{Eq:VVV2}
\end{align}
Setting $y = x$ in, say, \eqref{Eq:VVV1}, we see from Lemma \ref{Lem:EEE_alt} that $\VVV_n$ is
a variety of epigroups and in particular,

\begin{equation}\label{Eq:EVE}
\EEE_n\subseteq \VVV_n \subseteq \EEE_{n+1}\,.
\end{equation}

That every variant of an epigroup is an epigroup is easy to see, but what is not so obvious
is what happens to the pseudoinverse operation.
Our first main result clarifies this and also the role of the varieties $\VVV_n$.

\begin{theorem}\label{Thm:main1}
  Let $(S,\cdot,{}')$ be an epigroup. For each $c\in S$, the unary variant
  $(S,\cdot_c,{}^{\ast})$ is an epigroup. If $(S,\cdot,{}')\in \VVV_n$ for some $n > 0$,
  then $(S,\cdot_c,{}^{\ast})\in \VVV_n$. Therefore for $n\in \mathbb{N}$, the variety $\VVV_n$ is
  closed under taking variants.
\end{theorem}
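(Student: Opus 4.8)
The plan is to establish the three assertions in order. First, to see that $(S,\cdot_c,{}^{\ast})$ is an epigroup, I would verify identities \eqref{Eq:epi1} and \eqref{Eq:epi2} for the new operations, since any unary semigroup satisfying these two is an epigroup (indeed, for each element $x$, \eqref{Eq:epi1} and \eqref{Eq:epi2} force $x^{\ast}$ to be the pseudoinverse of $x$ in $(S,\cdot_c)$, because $xx^{\ast}=x^{\ast}x$ and $x^{\ast}xx^{\ast}=x^{\ast}$ characterize the pseudoinverse in an epigroup). For this computation, the essential device is the key identity \eqref{Eq:xstarc}, namely $(xc)'=x^{\ast}c$; a symmetric computation using \eqref{Eq:epi5} should also give $c(cx)'=cx^{\ast}$, so that $x^{\ast}$ is ``sandwiched'' correctly. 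Then $x\cdot_c x^{\ast}=xc x^{\ast}=x(cx^{\ast})=x\cdot c(cx)'$ can be manipulated with \eqref{Eq:epi1}--\eqref{Eq:epi5} and compared with $x^{\ast}\cdot_c x = x^{\ast}cx=(xc)'x$; both should reduce to the same element, giving \eqref{Eq:epi2} for $\cdot_c$. Similarly $x^{\ast}\cdot_c x\cdot_c x^{\ast}=(xc)'x(cx)'=x^{\ast}$ is immediate from the definition, which is \eqref{Eq:epi1} for $\cdot_c$. This part is essentially bookkeeping with the six epigroup identities plus \eqref{Eq:xstarc}.

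Next, assuming $(S,\cdot,{}')\in\VVV_n$, I would show $(S,\cdot_c,{}^{\ast})\in\VVV_n$ by verifying \eqref{Eq:VVV1} and \eqref{Eq:VVV2} for $\cdot_c$ and ${}^{\ast}$ (the identities \eqref{Eq:epi1}, \eqref{Eq:epi2} for the variant having just been established). The crucial preliminary observation is a formula relating powers in the variant to powers in $S$: by associativity of $\cdot_c$ one has $x^{\cdot_c k}=x(cx)^{k-1}=\left(xc\right)^{k-1}x$ for $k\ge 1$, where $x^{\cdot_c k}$ denotes the $k$-th power in $(S,\cdot_c)$. Combining this with \eqref{Eq:xstarc} and its mirror image, I expect each side of \eqref{Eq:VVV1}/\eqref{Eq:VVV2} for the variant to collapse, after pulling out factors of $c$, to an instance of \eqref{Eq:VVV1}/\eqref{Eq:VVV2} in $(S,\cdot,{}')$ applied to the elements $xc$ or $cx$ (together with routine uses of \eqref{Eq:epi1}--\eqref{Eq:epi5}). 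This is where the two identities defining $\VVV_n$ were presumably designed to be stable: \eqref{Eq:VVV1} is a ``right-handed'' condition and \eqref{Eq:VVV2} a ``left-handed'' one, and the variant swaps/mixes left and right multiplications, so one genuinely needs both to close the computation. The third assertion, that $\VVV_n$ is closed under taking variants, is then immediate: $\VVV_n$ is a variety, hence closed under subalgebras and homomorphic images, and we have just shown it is closed under the unary-variant construction, so every variant $(S,\cdot_c)$ of a semigroup underlying an algebra in $\VVV_n$ carries the unary operation ${}^{\ast}$ making it lie in $\VVV_n$.

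The main obstacle I anticipate is the power formula and its interaction with the defining identities of $\VVV_n$: one must track carefully how $\left(x^{\cdot_c k}\right)^{\ast}$ relates to $(xc)'$, $(cx)'$, $x''$, etc., since ${}^{\ast}$ is not simply ${}'$ conjugated by $c$. I would first prove clean auxiliary lemmas, e.g. $\left(x^{\cdot_c k}\right)^{\ast}\cdot_c\text{(something)}$ formulas, perhaps $x^{\cdot_c k}\cdot_c x^{\ast}=(xc)^{k-1}x\cdot c(cx)'=(xc)^{k-1}\cdot(xc)'$ after using \eqref{Eq:xstarc} in reverse — so that a power-of-$xc$ identity in $S$ can be invoked directly. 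Once such reductions are in hand, each of \eqref{Eq:VVV1} and \eqref{Eq:VVV2} for the variant should follow in a handful of steps. A secondary point to be careful about: the identity $x^{n-1}x''=x^n$ (Lemma \ref{Lem:EEE_alt}) guarantees $(S,\cdot,{}')\in\EEE_{n+1}$, and \eqref{Eq:EVE} will be convenient for knowing that indices behave, but the real content is the exact $\VVV_n$-membership, not merely epigroup-ness, so I would resist the temptation to argue only up to index and instead push the equational identities through verbatim.
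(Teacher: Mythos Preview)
Your overall strategy for the $\VVV_n$-closure part is essentially the paper's: reduce the variant identities to instances of \eqref{Eq:VVV1}, \eqref{Eq:VVV2} in $(S,\cdot,{}')$ applied to $cy$ (or $yc$). The paper makes this explicit via the single formula $cy^{\ast\ast}=(cy)''$ (and its mirror), after which \eqref{Eq:VVV1} for $(S,\cdot_c,{}^{\ast})$ drops out in one line; you would likely rediscover this once you tried to compute $y^{\ast\ast}$.

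There is, however, a genuine gap in your treatment of the first assertion. You assert that a unary semigroup satisfying \eqref{Eq:epi1} and \eqref{Eq:epi2} is automatically an epigroup with the given unary operation as pseudoinverse. That is false. For a trivial counterexample, take the free monogenic semigroup $\{a,a^2,\ldots\}$ with a zero adjoined and set $x'=0$ for all $x$: both \eqref{Eq:epi1} and \eqref{Eq:epi2} hold, yet $a$ is not an epigroup element. Even when the underlying semigroup \emph{is} an epigroup, \eqref{Eq:epi1} and \eqref{Eq:epi2} do not pin down the pseudoinverse: in a nontrivial semilattice of groups $G_1\to G_2$ one may set $x^{\ast}=\phi(x)^{-1}\in G_2$ for $x\in G_1$, which satisfies both identities but is not the pseudoinverse $x^{-1}\in G_1$. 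So verifying \eqref{Eq:epi1} and \eqref{Eq:epi2} in the variant is necessary but not sufficient; you still owe an argument that ${}^{\ast}$ is the pseudoinverse of $(S,\cdot_c)$. The paper supplies exactly this missing ingredient by computing, for each $a$, that $a^{(k+1)}\cdot_c a^{\ast}=a\,(ca)^{k}(ca)'$ and hence $a^{(n+2)}\cdot_c a^{\ast}=a^{(n+1)}$ whenever $ca$ has index $n$ in $(S,\cdot,{}')$. Together with \eqref{Eq:epi1} and \eqref{Eq:epi2} this places each element in some $\EEE_k$-type relation, which \emph{does} force ${}^{\ast}$ to be the pseudoinverse. (A small related slip: your line $x^{\ast}\cdot_c x\cdot_c x^{\ast}=(xc)'x(cx)'$ forgets the $c$'s coming from $\cdot_c$; the left side is $x^{\ast}cxc\,x^{\ast}$, and reducing it to $x^{\ast}$ requires the short computation the paper carries out.)
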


\begin{corollary}\label{Cor:CR}
Let $(S,\cdot,{}')$ be a completely regular semigroup. For each $c\in S$, the unary variant
$(S,\cdot_c,{}^{\ast})$ lies in $\VVV_1$.
\end{corollary}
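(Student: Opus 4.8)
The plan is to obtain this as the special case $n=1$ of Theorem~\ref{Thm:main1}. First I would observe that a completely regular semigroup, regarded as a unary semigroup $(S,\cdot,{}')$ via its pseudoinverse, is exactly a member of $\EEE_1$: each element $x$ lies in the maximal subgroup $H_x$, the pseudoinverse $x'$ is just the inverse of $x$ in $H_x$ (so in particular $x''=x$), and from this \eqref{Eq:epi1}, \eqref{Eq:epi2} and $x^2x'=x$ are immediate. By the inclusions \eqref{Eq:EVE} with $n=1$ we have $\EEE_1\subseteq\VVV_1$, whence $(S,\cdot,{}')\in\VVV_1$.

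Having recorded that $(S,\cdot,{}')\in\VVV_1$, I would then simply invoke Theorem~\ref{Thm:main1} with $n=1$: since $1>0$, the theorem gives $(S,\cdot_c,{}^{\ast})\in\VVV_1$, as desired. This is the whole proof; there is no genuine obstacle, since all of the work has been done in Theorem~\ref{Thm:main1} and the only additional ingredient is the elementary inclusion $\EEE_1\subseteq\VVV_1$.

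It is worth pointing out why $\VVV_1$, rather than $\EEE_1$, is the natural target. As noted in the introduction, if a completely regular semigroup has a zero $0$, then its variant at $0$ is a null semigroup, which is not even regular and hence cannot lie in $\EEE_1$; on the other hand null semigroups do lie in $\VVV_1$, so the conclusion of Corollary~\ref{Cor:CR} is best possible among the $\EEE_n$ and $\VVV_n$. Since $\VVV_1\subseteq\EEE_2$ by \eqref{Eq:EVE}, this is consistent with Proposition~\ref{Prp:summary}(3).
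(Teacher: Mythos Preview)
Your proposal is correct and matches the paper's intent: the corollary is stated without proof immediately after Theorem~\ref{Thm:main1}, and the implicit argument is exactly yours---completely regular semigroups lie in $\EEE_1\subseteq\VVV_1$ by \eqref{Eq:EVE}, so Theorem~\ref{Thm:main1} with $n=1$ applies. Your additional remarks about null semigroups and why $\VVV_1$ is the right target are accurate and helpful context, though not part of the proof proper.
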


\begin{example}
Not every unary semigroup in $\VVV_1$ is a variant of a completely
regular semigroup. Using \textsc{Mace4}, we found that the smallest examples
have order $4$, and there are three of them up to isomorphism:
\[
\begin{array}{r|cccc}
\cdot & 0 & 1 & 2 & 3 \\
\hline
    0 & 1 & 1 & 1 & 1 \\
    1 & 1 & 1 & 1 & 1 \\
    2 & 1 & 1 & 2 & 1 \\
    3 & 1 & 1 & 1 & 3 \\
\end{array}
\qquad\qquad
\begin{array}{r|cccc}
\cdot & 0 & 1 & 2 & 3 \\
\hline
    0 & 1 & 1 & 2 & 2 \\
    1 & 1 & 1 & 2 & 2 \\
    2 & 2 & 2 & 2 & 2 \\
    3 & 2 & 2 & 2 & 3 \\
\end{array}
\qquad\qquad
\begin{array}{r|cccc}
\cdot & 0 & 1 & 2 & 3 \\
\hline
    0 & 2 & 2 & 1 & 1 \\
    1 & 2 & 2 & 1 & 1 \\
    2 & 1 & 1 & 2 & 2 \\
    3 & 1 & 1 & 2 & 3 \\
\end{array}
\]
The corresponding pseudoinverse operation is the same for all three
epigroups: $0'=1$ and $x'=x$ for $x=1,2,3$.
\end{example}

\begin{remark}\label{Rem:homom}
For an element $c$ of a semigroup $S$, the mapping  $\rho_c : S\to Sc; x\mapsto xc$
is a homomorphism from the variant $(S,\cdot_c)$ to $(Sc,\cdot)$ since
$(x\cdot_c y)c = xc\cdot yc$. If $S$ is also an epigroup, then as already noted, so is
$Sc$. Every semigroup homomorphism between epigroups is an epigroup homomorphism, but
\eqref{Eq:xstarc} shows more explicitly how $\rho_c$ preserves pseudoinverses.
\end{remark}

Comparing Proposition \ref{Prp:summary}(3) with Corollary \ref{Cor:CR} raises the question
of how the varieties $\VVV_1$ and $\WWW$ are related other than just the fact that both
contain $\EEE_1$.
Our second main result addresses this and its corollary connects this discussion
to the transitivity of $\cp$.

\begin{theorem}\label{Thm:main2}
  \begin{enumerate}
    \item[]
    \item\qquad $\VVV_1 \subset \WWW$;
    \item\qquad If $(S,\cdot,{}')$ is an epigroup in $\WWW$, then for each $c\in S$,
    the unary variant $(S,\cdot_c,{}^{\ast})$ lies in $\VVV_1$.
  \end{enumerate}
\end{theorem}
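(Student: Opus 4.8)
The plan is to prove the two parts by direct equational manipulation, leaning on Lemma \ref{Lem:WWW_alt} for part (1) and on Theorem \ref{Thm:main1} together with the identity \eqref{Eq:xstarc} for part (2). For part (1), I would first argue the inclusion $\VVV_1\subseteq\WWW$ and then exhibit a separating example (e.g.\ one of the order-$4$ unary semigroups already displayed, or a small null-type epigroup) to get strictness. For the inclusion: by Lemma \ref{Lem:WWW_alt} it suffices to check that an arbitrary $(S,\cdot,{}')\in\VVV_1$ satisfies \eqref{Eq:epi1}, \eqref{Eq:epi2}, \eqref{Eq:epi4}, \eqref{Eq:epi6} for $p=2$, and $(xy)''=xy$. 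The first two come for free since they are among the defining identities of $\VVV_1$. Setting $n=1$ in \eqref{Eq:VVV1} gives $xy'' = xy$, and setting $n=1$ in \eqref{Eq:VVV2} gives $x''y = xy$; these two "absorption" identities are the workhorses. From $xy''=xy$ with $x$ replaced appropriately and $y''=y$-type substitutions (legitimate since $\VVV_1$ is a variety of epigroups by the remark preceding \eqref{Eq:EVE}, so \eqref{Eq:epi1}--\eqref{Eq:epi6} are available), I expect $(xy)''=xy$ to fall out quickly: write $(xy)'' = (xy)(xy)'(xy)$ by \eqref{Eq:epi4}, and then collapse using the absorption identities. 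Similarly \eqref{Eq:epi4} ($xx'x=x''$) is one of the global epigroup identities, hence automatic, and \eqref{Eq:epi6} for $p=2$, $(x^2)'=(x')^2$, is likewise a global epigroup identity; so in fact the bulk of the list is immediate and only $(xy)''=xy$ requires the $\VVV_1$-specific input. I would double-check this reduction carefully, since it is the one place where something could go wrong.

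For part (2), let $(S,\cdot,{}')$ be an epigroup in $\WWW$ and fix $c\in S$. By Theorem \ref{Thm:main1} the unary variant $(S,\cdot_c,{}^{\ast})$ is an epigroup, so I only need to verify the two $\VVV_1$-identities \eqref{Eq:VVV1} and \eqref{Eq:VVV2} for $n=1$ in $(S,\cdot_c,{}^{\ast})$; spelled out with $\cdot_c$ these read $x\cdot_c y^{\ast\ast} = x\cdot_c y$ and $x^{\ast\ast}\cdot_c y = x\cdot_c y$, i.e.\ $xc\,y^{\ast\ast}\,? = xcy$ after expanding the operation — more precisely $x\cdot_c y^{\ast\ast} = xc(y^{\ast\ast})$ and $x\cdot_c y = xcy$, so the first identity becomes $xc\,y^{\ast\ast} = xcy$ and the second becomes $x^{\ast\ast}cy = xcy$. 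Now $y^{\ast\ast}$ denotes the double pseudoinverse computed in the variant, and the key computational device is to push everything through the homomorphism $\rho_c$ of Remark \ref{Rem:homom} (or equivalently apply \eqref{Eq:xstarc} twice): one gets $y^{\ast\ast}c = ((yc)')^{*}c$-type expressions that, upon using \eqref{Eq:xstarc}, reduce to statements about the original pseudoinverse in the completely regular subsemigroup $cS$ (or $Sc$). Concretely, I expect $y^{\ast}c = (yc)'$ and then $(y^{\ast})^{\ast}c = (y^\ast c\, c)'$-style rewriting, landing inside $cS$ which is completely regular by Lemma \ref{Lem:WWW}; inside a completely regular semigroup double pseudoinverse acts like identity on products, which is exactly what is needed. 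I would also use \eqref{Eq:epi5} to move the floating $c$'s past pseudoinverses.

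The main obstacle I anticipate is the bookkeeping in part (2): the starred unary operation is itself defined via \eqref{Eq:new_unary} as $x^{\ast}=(xc)'x(cx)'$, so $x^{\ast\ast}$ unwinds into a fairly long word in $S$ involving several $c$'s and three layers of primes, and one must simplify it using \eqref{Eq:epi1}--\eqref{Eq:epi6}, \eqref{Eq:xstarc}, and the completely-regular behavior of $cS$ or $Sc$ ($z''=z$ there). The cleanest route is probably to avoid expanding $x^{\ast\ast}$ head-on and instead work with the relation \eqref{Eq:xstarc}: from $(xc)'=x^*c$ one deduces, applying $\ast$ and the same lemma again, an identity for $x^{\ast\ast}c$ purely in terms of iterated ordinary pseudoinverses of $xc$, and since $xc\in cS\cap Sc$ which is completely regular, $(xc)''' = (xc)'$ and $((xc)c)$-type elements are well-behaved. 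So the strategy is: reduce each $\VVV_1$-identity in the variant to an identity about ordinary pseudoinverses of elements of the completely regular ideal, then invoke that $\mathcal{W}$-membership forces those elements to satisfy the completely regular law $z = zz'z'z = \ldots$, i.e.\ $z''=z$ on products. Granting the machinery assembled above, each verification should be a short chain of rewrites; the risk is purely in not mis-tracking a prime or a stray $c$.
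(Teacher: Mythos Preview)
Your plan is essentially the paper's: for (1) reduce membership in $\WWW$ to verifying $(xy)''=xy$ via the absorption identities $xy''=xy$ and $x''y=xy$ (the paper routes this through Proposition~\ref{Prp:summary}(1) rather than Lemma~\ref{Lem:WWW_alt}, but the substance is identical), and for (2) use the identity $cx^{\ast\ast}=(cx)''$---which is exactly Lemma~\ref{Lem:main1-step3}, obtainable as you suggest by iterating \eqref{Eq:xstarc} and its left--right dual $cx^{\ast}=(cx)'$---together with the $\WWW$-law $(ab)''=ab$ to collapse $x\cdot_c y^{\ast\ast}=x(cy)''=xcy$, and symmetrically for \eqref{Eq:VVV2}.

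One concrete slip to flag: neither of your proposed separating examples for the strictness in (1) works. The three order-$4$ semigroups displayed earlier lie \emph{in} $\VVV_1$ (that was precisely the point of that example), and null semigroups also lie in $\VVV_1$, since there $x'=0$ for all $x$, whence $x''y=0=xy$ and $xy''=0=xy$. The paper instead exhibits a different $4$-element epigroup in $\WWW$ in which $0''\cdot 1\neq 0\cdot 1$, so \eqref{Eq:VVV2} fails.
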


In particular, every variant of an epigroup in $\WWW$ actually lies in a
proper subvariety of $\WWW$, a stronger statement than the assertion that
$\WWW$ is closed under taking variants.

We can now give an affirmative answer to Problem 6.18 in \cite{ArKiKoMa14}.
Since $\cp$ is transitive in $\WWW$, the theorem immediately implies
the following.

\begin{corollary}\label{Cor:transitive}
Primary conjugacy $\cp$ is transitive in every variant of any epigroup in $\WWW$.
\end{corollary}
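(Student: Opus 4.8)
The plan is to obtain the corollary by simply chaining together the results already established; no new argument is required, since all the substance has been concentrated in Theorem~\ref{Thm:main2} and in Proposition~\ref{Prp:summary}(2). First I would fix an epigroup $(S,\cdot,{}')$ in $\WWW$ and an arbitrary $c\in S$, and consider its variant $(S,\cdot_c)$. What we must show is that primary conjugacy $\cp$, computed in the semigroup $(S,\cdot_c)$, is transitive. Observe that $\cp$ depends only on the binary operation, so the unary operation ${}^{\ast}$ is irrelevant to the statement itself; it serves only as a device for locating $(S,\cdot_c)$ inside a variety whose conjugacy behaviour we already understand.

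Next I would apply Theorem~\ref{Thm:main2}(2): because $S$ is an epigroup in $\WWW$, the unary variant $(S,\cdot_c,{}^{\ast})$ lies in $\VVV_1$. By \eqref{Eq:EVE} with $n=1$ we have $\VVV_1\subseteq\EEE_2$, so $(S,\cdot_c,{}^{\ast})$ satisfies the defining identities of a variety of epigroups; in particular the semigroup $(S,\cdot_c)$ is an epigroup, as also asserted directly in Theorem~\ref{Thm:main1}. Combining this with Theorem~\ref{Thm:main2}(1), which gives $\VVV_1\subset\WWW$, we conclude that $(S,\cdot_c)$ is an epigroup lying in $\WWW$.

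Finally I would invoke Proposition~\ref{Prp:summary}(2) for the epigroup $(S,\cdot_c)\in\WWW$: primary conjugacy is transitive there. As $c$ was arbitrary, this establishes transitivity of $\cp$ in every variant of every epigroup in $\WWW$, which is the assertion of the corollary (and answers Problem 6.18 of \cite{ArKiKoMa14}). There is no real obstacle at this stage; the difficulty has been pushed upstream into the proof of Theorem~\ref{Thm:main2}, where one must verify the identities \eqref{Eq:VVV1} and \eqref{Eq:VVV2} with $n=1$ for the operation ${}^{\ast}$ on $(S,\cdot_c)$, using only that $S$ belongs to $\WWW$, i.e.\ that every product in $S$ is completely regular and satisfies $(xy)''=xy$.
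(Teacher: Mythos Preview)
Your proposal is correct and follows exactly the route the paper takes: the paper simply remarks that since $\cp$ is transitive in $\WWW$ (Proposition~\ref{Prp:summary}(2)) and Theorem~\ref{Thm:main2} places every variant of a $\WWW$-epigroup back in $\WWW$ (via $\VVV_1\subset\WWW$), the corollary is immediate. You have merely unpacked this one-line deduction in more detail.
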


The proofs of Theorems \ref{Thm:main1} and \ref{Thm:main2} will be given in
{\S}\ref{Sec:proofs}. In {\S}\ref{Sec:problems}, we conclude with some open problems.

\section{Proofs}
\label{Sec:proofs}

Let $(S,\cdot,{}')$ be an epigroup and fix $c\in S$. To verify Theorem \ref{Thm:main1}, we
start with a few lemmas.

\begin{lemma}\label{Lem:main1-step1}
$(S,\cdot_c,{}^*)$ satisfies \eqref{Eq:epi1} and \eqref{Eq:epi2}.
\end{lemma}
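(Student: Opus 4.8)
The plan is to verify the two identities \eqref{Eq:epi1} and \eqref{Eq:epi2} directly by expanding the definitions. Recall that in $(S,\cdot_c,{}^*)$ we have $a \cdot_c b = acb$ and $x^* = (xc)'x(cx)'$, and that by \eqref{Eq:xstarc} we have the convenient fact $(xc)' = x^*c$, together with its left-handed mirror, which should read $(cx)' = cx^*$ (provable by the symmetric computation $(cx)' = (cx)'(cx)(cx)' = cx(cx)'(cx)' = c\cdot(xc)'x(cx)' = cx^*$ using \eqref{Eq:epi1}, \eqref{Eq:epi2}, \eqref{Eq:epi5}). These two identities are the workhorses: they let me convert any occurrence of $c$ adjacent to a starred term into an ordinary pseudoinverse in $S$, so that the epigroup identities \eqref{Eq:epi1}--\eqref{Eq:epi5} of $(S,\cdot,{}')$ become available.

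First I would handle \eqref{Eq:epi2}, i.e. $x^* \cdot_c x = x \cdot_c x^*$. Expanding the left side, $x^* \cdot_c x = x^* c x = (xc)' x$ by \eqref{Eq:xstarc}, and expanding the right side, $x \cdot_c x^* = x c x^* = x(cx)'$ by the mirror identity. But $(xc)'x = x(cx)'$ is exactly \eqref{Eq:epi5}, so \eqref{Eq:epi2} holds immediately. Next, for \eqref{Eq:epi1}, i.e. $x^* \cdot_c x \cdot_c x^* = x^*$, I expand the left side as $x^* c x c x^* = (x^*c)(xc)x^* = (xc)'(xc)x^*$ using \eqref{Eq:xstarc}; then write $x^* = (xc)'x(cx)' = (xc)'\cdot x(cx)'$ and push the $(xc)'(xc)$ prefix through using \eqref{Eq:epi1} in $S$ (since $(xc)'(xc)(xc)' = (xc)'$), giving $(xc)'x(cx)' = x^*$. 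So both identities reduce, after the substitutions from \eqref{Eq:xstarc} and its mirror, to instances of the epigroup axioms already established for $(S,\cdot,{}')$.

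The only genuine obstacle I anticipate is bookkeeping: making sure the mirror identity $(cx)' = cx^*$ is in hand (it is not stated in the excerpt, only the left-sided \eqref{Eq:xstarc}), since \eqref{Eq:epi2} really does need both sides. Once that is recorded — either as a one-line consequence of \eqref{Eq:epi5} as above, or by invoking the left--right dual of the lemma preceding this one — the rest is a short symbol chase with no case analysis. I would present the mirror fact first (perhaps folding it into the proof of the earlier lemma, or as a remark), and then the two verifications are each two or three equalities long.
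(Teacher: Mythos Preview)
Your argument is correct. By first recording the mirror of \eqref{Eq:xstarc}, namely $(cx)'=cx^{*}$, you reduce both verifications to one-line applications of \eqref{Eq:epi5} and \eqref{Eq:epi1} respectively; the mirror computation you give is valid.

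The paper takes a slightly different route: it does \emph{not} invoke \eqref{Eq:xstarc} at all, but instead expands $x^{*}\cdot_c x\cdot_c x^{*}$ and $x\cdot_c x^{*}$ fully in terms of $(xc)'$, $(cx)'$ and then manipulates directly with \eqref{Eq:epi1}, \eqref{Eq:epi2}, \eqref{Eq:epi5} (each used several times). Your approach is shorter and more transparent, since the substitutions $x^{*}c=(xc)'$ and $cx^{*}=(cx)'$ absorb most of the symbol-shuffling in one step and make clear that the variant identities are just the original epigroup identities transported through $\rho_c$ and its left-handed analogue. The paper's version is self-contained within the lemma, but since \eqref{Eq:xstarc} is already on record, that is not a real gain. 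Your suggestion to state the mirror identity alongside \eqref{Eq:xstarc} would in fact streamline the exposition.
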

\begin{proof}
First we compute
\begin{alignat*}{3}
x^*\cdot_c x\cdot_c x^* &= (xc)' \underbrace{x (cx)'} cxc (xc)' \underbrace{x (cx)'}
    && \by{Eq:epi5} (xc)'\underbrace{(xc)'xc}\underbrace{xc(xc)'}(xc)'x \\
& \by{Eq:epi2} (xc)'xc(xc)'\cdot (xc)'xc(xc)'\cdot x
    && \by{Eq:epi1} (xc)'\underbrace{(xc)'x} \\
& \by{Eq:epi5} (xc)'x(cx)'
    &&= x^*\,,
\end{alignat*}
which establishes \eqref{Eq:epi1}.

Next we have
\begin{alignat*}{3}
  x\cdot_c x^* &= \underbrace{xc(xc)'}x(cx)'
    && \by{Eq:epi2} (xc)'xc\underbrace{x(cx)'} \\
  & \by{Eq:epi5} (xc)'x\underbrace{c(xc)'}x
    && \by{Eq:epi5} (xc)'x(cx)'cx \\
  &= x^*\cdot_c x\,, &&
\end{alignat*}
which establishes \eqref{Eq:epi2}.
\end{proof}

We will denote powers of elements in $(S,\cdot_c)$ with parentheses in the exponent, that is,
$x^{(1)} = x$ and $x^{(n)} = x\cdot_c x^{(n-1)}$ for $n > 1$.

\begin{lemma}\label{Lem:main1-step2}
If $ca$ has index $n$ in $(S,\cdot,{}')$, then $a$ is an epigroup element of index $n$ or $n+1$
in $(S,\cdot_c,{}^{\ast})$.
\end{lemma}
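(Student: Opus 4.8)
The plan is to identify exactly which powers of $a$ in the variant land in a subgroup, by transporting the question into $S$ along the two obvious homomorphisms out of $(S,\cdot_c)$. The first step is the bookkeeping identity: a trivial induction on $m$ from $x\cdot_c y=xcy$ gives $a^{(m)}=a(ca)^{m-1}=(ac)^{m-1}a$, so, writing $d=ca$, we have $a^{(m)}=ad^{m-1}$, and hence $c\cdot a^{(m)}=(ca)^m=d^{m}$ while $a^{(m)}\cdot c=(ac)^m$. Since $S$ is an epigroup, $d$ is group-bound, say of index $n$, and the whole argument will revolve around the subgroup $H_{d^n}$ with identity $e$ (so $de\in H_{d^n}$, $d'=(de)^{-1}$, and $d^{\,n+1}\in H_{d^n}$).

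For the lower bound on the index I would use the left--right dual of Remark~\ref{Rem:homom}: the map $\lambda_c\colon(S,\cdot_c)\to(cS,\cdot)$, $x\mapsto cx$, is a homomorphism because $c(x\cdot_c y)=(cx)(cy)$, and homomorphisms carry subgroups to subgroups. Thus if $a^{(m)}$ lies in a subgroup of $(S,\cdot_c)$, then $\lambda_c(a^{(m)})=d^{m}$ lies in a subgroup of $S$, which (as $d$ has index $n$) forces $m\ge n$. Hence $a$ has index at least $n$ in $(S,\cdot_c,{}^{\ast})$.

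For the upper bound I would show that $u:=a^{(n+1)}=ad^n$ lies in a subgroup of $(S,\cdot_c)$. By Lemma~\ref{Lem:main1-step1} the variant already satisfies $u^{\ast}\cdot_c u\cdot_c u^{\ast}=u^{\ast}$ and $u\cdot_c u^{\ast}=u^{\ast}\cdot_c u$, so it remains only to prove the regularity relation $u\cdot_c u^{\ast}\cdot_c u=u$; together with the commuting of $u$ and $u^{\ast}$ this is enough, since then $g:=u\cdot_c u^{\ast}$ is idempotent and is a two-sided $\cdot_c$-identity for $u$, so $u\in g\cdot_c S\cdot_c g$ and $g\cdot_c u^{\ast}\cdot_c g$ is a two-sided $\cdot_c$-inverse of $u$ there, making $u$ a unit of that local submonoid and hence an element of a subgroup of $(S,\cdot_c)$. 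To prove the relation I would expand $u\cdot_c u^{\ast}\cdot_c u = uc(uc)'\,u\,(cu)'cu$, rewrite $(cu)'cu$ as $(cu)(cu)'$ by \eqref{Eq:epi2}, move the idempotent $uc(uc)'$ past $u$ using \eqref{Eq:epi5} with $x=u$, $y=c$, and then collapse the repeated idempotent by \eqref{Eq:epi1}, obtaining $u\cdot(cu)(cu)'=u\,d^{\,n+1}(d^{\,n+1})'$. Finally, because $d=ca$ has index exactly $n$, we have $d^{\,n+1}\in H_{d^n}$, so $(d^{\,n+1})'$ is its inverse in $H_{d^n}$ and $d^{\,n+1}(d^{\,n+1})'=e$; hence $u\,d^{\,n+1}(d^{\,n+1})'=a\,d^{2n+1}(d^{\,n+1})'=a\,d^{n}e=ad^n=u$. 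Combined with the lower bound, the index of $a$ in $(S,\cdot_c,{}^{\ast})$ is $n$ or $n+1$.

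The calculation is almost entirely routine use of \eqref{Eq:epi1}, \eqref{Eq:epi2}, \eqref{Eq:epi5}; the one place that really uses the hypothesis --- and the place I expect to need the most care --- is the last step, where ``$ca$ has index exactly $n$'' (not merely group-bound) is precisely what yields $d^{\,n+1}(d^{\,n+1})'=e$ and therefore $d^{2n+1}(d^{\,n+1})'=d^n$, via the fact that the pseudoinverse of a completely regular element is its group inverse in its maximal subgroup. The other point worth spelling out, rather than asserting, is why the two relations from Lemma~\ref{Lem:main1-step1} together with $u\cdot_c u^{\ast}\cdot_c u=u$ actually put $u$ inside a subgroup of $(S,\cdot_c)$, which is the local-submonoid argument sketched above.
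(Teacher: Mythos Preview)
Your argument is correct. Both bounds go through: the homomorphism $\lambda_c\colon x\mapsto cx$ from $(S,\cdot_c)$ to $(cS,\cdot)$ really does force $m\ge n$ whenever $a^{(m)}$ lies in a subgroup of the variant, and your verification that $u=a^{(n+1)}$ satisfies $u\cdot_c u^{\ast}\cdot_c u=u$ (hence, together with Lemma~\ref{Lem:main1-step1}, lies in a subgroup) is accurate, including the final step $d^{\,n+1}(d^{\,n+1})'=e$ and $d^n e=d^n$.

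The route, however, differs from the paper's. The paper observes (the left dual of \eqref{Eq:xstarc}) that $c a^{\ast}=(ca)'$, and then computes once and for all
\[
a^{(k+1)}\cdot_c a^{\ast}=a(ca)^{k}\cdot c a^{\ast}=a(ca)^{k}(ca)'\,,\qquad a^{(k)}=a(ca)^{k-1}\,.
\]
Thus the variant index equation $a^{(k+1)}\cdot_c a^{\ast}=a^{(k)}$ is exactly $a(ca)^{k}(ca)'=a(ca)^{k-1}$, and left-multiplying by $c$ gives $(ca)^{k+1}(ca)'=(ca)^{k}$. Both bounds drop out of this single translation: the upper bound from $(ca)^{n+1}(ca)'=(ca)^n$, the lower from the converse. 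Your approach splits the two directions, using the homomorphism for the lower bound (conceptually cleaner than the paper's left-multiply-by-$c$ trick) and a direct complete-regularity check of $a^{(n+1)}$ for the upper bound (longer, and it reproves a special case of what $c a^{\ast}=(ca)'$ already encodes). The paper's proof is shorter and more uniform; yours makes the role of the homomorphisms $\rho_c,\lambda_c$ more visible and is arguably more transparent about \emph{why} $a^{(n+1)}$ sits in a subgroup.
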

\begin{proof}
For any $k > 0$, we have
$a^{(k+1)}\cdot_c a^* = (ac)^{k+1} a^* = a(ca)^k \underbrace{ca^*}
= a\underbrace{(ca)^k (ca)'}$ using \eqref{Eq:xstarc}, and
$a^{(k)} = a(ca)^k$. Thus if $ca$ has index $n$, then $a^{(n+2)}\cdot_c a^* = a^{(n+1)}$,
thus $a$ is an epigroup element of index at most $n+1$ in $(S,\cdot_c,{}^{\ast})$.
If $a$ has index $k\leq n+1$ in $(S,\cdot_c,{}^{\ast})$, then $a(ca)^k (ca)' = a(ca)^k$ and so
$(ca)^{k+1}(ca)' = (ca)^k$ and so $k \geq n$.
\end{proof}

\begin{lemma}\label{Lem:main1-step3}
For all $x\in S$,
\begin{equation}\label{Eq:starstar}
  cx^{**} = (cx)''\,.
\end{equation}
\end{lemma}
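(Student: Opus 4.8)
The plan is to reduce \eqref{Eq:starstar} to a simpler identity and then iterate it. Specifically, I would first establish
\[
  cx^{*} = (cx)'
\]
for all $x\in S$, and then obtain \eqref{Eq:starstar} by applying this identity twice.

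To prove the displayed identity, I would start from the definition $x^{*} = (xc)'\,x\,(cx)'$ in \eqref{Eq:new_unary}, so that $cx^{*} = c(xc)'\,x\,(cx)'$. The one point that is not purely mechanical is to read \eqref{Eq:epi5} "from right to left": with $x$ replaced by $c$ and $y$ replaced by $x$, it says $(cx)'c = c(xc)'$, which lets me pull the leading factor $c$ through $(xc)'$ and convert it into the outer factor $(cx)'$. Substituting this gives $cx^{*} = (cx)'\,(cx)\,(cx)'$, which is of the form $z'zz'$ with $z = cx$, hence equals $(cx)'$ by \eqref{Eq:epi1}. I would package $cx^{*}=(cx)'$ as a small intermediate claim (or even a stand-alone lemma, since it is the dual of \eqref{Eq:xstarc} in the sense of Remark~\ref{Rem:homom}).

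To finish, I would replace $x$ by $x^{*}$ in the intermediate claim: since $x^{**} = (x^{*})^{*}$, this yields $c\,x^{**} = c\,(x^{*})^{*} = (c\,x^{*})'$, and then a second application of $cx^{*}=(cx)'$ gives $(c\,x^{*})' = ((cx)')' = (cx)''$, which is exactly \eqref{Eq:starstar}. I do not expect any real obstacle here; the whole argument is two short applications of \eqref{Eq:epi5} and \eqref{Eq:epi1}, and the only thing to watch is the orientation in which \eqref{Eq:epi5} is invoked.
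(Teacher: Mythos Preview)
Your proposal is correct and follows essentially the same route as the paper: the paper's proof also reduces $cx^{**}$ to $(cx^*)'$ via \eqref{Eq:epi5} and \eqref{Eq:epi1}, and then reduces $cx^*$ to $(cx)'$ by the same pair of identities, simply performing both instances inline rather than stating the intermediate identity $cx^{*}=(cx)'$ separately. Your packaging of that intermediate identity as a dual to \eqref{Eq:xstarc} is a nice touch but does not change the underlying argument.
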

\begin{proof}
  Using \eqref{Eq:new_unary}, we compute
  \begin{alignat*}{3}
  c x^{**} &= c(x^*)*
    &&= \underbrace{c(x^* c)'}x^* (cx^*)' \\
  & \by{Eq:epi5} (cx^*)' cx^*(cx^*)'
    && \by{Eq:epi1} (cx^*)' \\
  &= (\underbrace{c(xc)'}x(cx)')'
    && \by{Eq:epi5} ((cx)'cx(cx)')'
  & \by{Eq:epi1} (cx)''\,.  \qedhere
  \end{alignat*}
\end{proof}

\begin{proof}[Proof of Theorem \ref{Thm:main1}]
  Assume $(S,\cdot,{}')$ is an epigroup. Then by Lemmas \ref{Lem:main1-step1} and \ref{Lem:main1-step2},
  $(S,\cdot_c,{}^{\ast})$ is also an epigroup.

  Suppose now that for some $n\in \mathbb{N}$, $(S,\cdot,{}')\in \VVV_n$. Then for all $x,y\in S$,
  \begin{alignat*}{3}
    x\cdot_c y^{(n-1)}\cdot_c y^{\ast\ast} &= x(cy)^{n-1}cy^{\ast\ast}
        &&\overset{\eqref{Eq:starstar}}{=} x(cy)^{n-1}(cy)''  \\
    &= x(cy)^n
        &&= x\cdot_c y^{(n)}\,,
  \end{alignat*}
  using $(S,\cdot,{}')\in \VVV_n$ in the third equality. Thus $(S,\cdot_c,{}^{\ast})\in \VVV_n$. This
  completes the proof.
\end{proof}

Now we turn to Theorem \ref{Thm:main2}.

\begin{lemma}\label{Lem:VinW}
$\VVV_1 \subset \WWW$
\end{lemma}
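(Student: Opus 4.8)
The plan is to show both inclusions: $\VVV_1 \subseteq \WWW$ and that the inclusion is proper. For the containment, I would use the characterization of $\WWW$ from Lemma~\ref{Lem:WWW_alt}, namely that $\WWW$ is the variety of unary semigroups satisfying \eqref{Eq:epi1}, \eqref{Eq:epi2}, \eqref{Eq:epi4}, \eqref{Eq:epi6} (for $p=2$) and $(xy)'' = xy$. So I must verify that a unary semigroup $(S,\cdot,{}')$ in $\VVV_1$ satisfies all five of these identities. Recall $\VVV_1$ is defined by associativity, \eqref{Eq:epi1}, \eqref{Eq:epi2}, and (from \eqref{Eq:VVV1}, \eqref{Eq:VVV2} with $n=1$) the identities $xy'' = xy$ and $x''y = xy$. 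Two of the five target identities, \eqref{Eq:epi1} and \eqref{Eq:epi2}, are immediate. Setting $y = x$ in $xy'' = xy$ gives $xx'' = x^2$, which by Lemma~\ref{Lem:EEE_alt} places $\VVV_1$ inside $\EEE_1$, the variety of completely regular semigroups; but more usefully, $\VVV_1 \subseteq \EEE_1 \subseteq \EEE_2$ means every element of $S$ is an epigroup element, so the identities \eqref{Eq:epi3}, \eqref{Eq:epi4}, \eqref{Eq:epi5}, \eqref{Eq:epi6} are available as tools. In particular \eqref{Eq:epi4} holds automatically. So the remaining work is to derive \eqref{Eq:epi6} (for $p=2$) and $(xy)'' = xy$ from the $\VVV_1$ axioms.

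For $(xy)'' = xy$: using $x''z = xz$ with $x \rightsquigarrow xy$ is not directly applicable, but I can compute $(xy)'' = (xy)(xy)'(xy)$ by \eqref{Eq:epi4}, and then try to collapse using $xy'' = xy$ and $x''y = xy$. A cleaner route: since $xx'' = x^2$ and $x$ is completely regular, $x'' = xx'x$ with $x' = x^{-1}$ the group inverse in $H_x$; completely regular semigroups satisfy $x'' = x$ precisely when... actually no. Let me instead observe: in $\VVV_1$, apply $x''y = xy$ with $y \rightsquigarrow (xy)'(xy)$ on the appropriate factor, or better, note $(xy)(xy)''(xy) = (xy)^3$ follows from $zz'' = z^2$, and separately $(xy)'' = (xy)(xy)'(xy)$; the identity $(xy)'' = xy$ should fall out by pushing $xy'' = xy$ through the inner product. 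I expect the correct derivation is: $(xy)'' \overset{\eqref{Eq:epi4}}{=} (xy)(xy)'(xy)$, and since $xy = x(y'')$ and $x''y = xy$, one can rewrite $x \cdot y(xy)'(xy)$ and absorb — the pseudoinverse of $xy$, call it $w$, satisfies $xy \cdot w = w \cdot xy$ and I want $xywxy = xy$, i.e. $xy$ is its own pseudoinverse's... this is exactly saying $xy$ has index $1$ and equals $(xy)''$, equivalently $H_{xy}$ being a group with $(xy)'' = xy$ which in a completely regular semigroup means the sandwich $xy(xy)'xy$ collapses — and this collapse is what $xy'' = xy$ together with $x''y = xy$ buys. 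I would write this out as a short \begin{align*}...\end{align*} chain. For \eqref{Eq:epi6} with $p=2$, i.e. $(x^2)' = (x')^2$: in a completely regular semigroup this is a standard identity (it holds in all epigroups by \eqref{Eq:epi6}), so since $\VVV_1 \subseteq \EEE_2$ consists of epigroups, it comes for free.

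For properness of the inclusion, I would invoke the three order-$4$ examples already displayed, but more to the point, I need a semigroup in $\WWW \setminus \VVV_1$. The natural candidate is a null semigroup with a zero adjoined in a suitable way, or better: any null semigroup of order $\geq 3$ is in $\WWW$ (all products are the zero, which is completely regular) but fails $x''y = xy$ or $xy'' = xy$ when the pseudoinverse structure is nontrivial — actually in a null semigroup $x' = $ the zero for all $x$, so $xy'' = x \cdot 0 = 0 = xy$ holds; I need a sharper example. The cleanest choice: take a completely regular semigroup $T$ with zero, such as a two-element semilattice $\{0, 1\}$ — no, that's in $\VVV_1$. Instead, take the variant of a group at $0$... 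Let me reconsider: $\WWW$ contains all variants of completely regular semigroups with zero, which are null, and $\VVV_1$ is closed under variants by Corollary~\ref{Cor:CR}, so both contain those. The separating example must exploit the asymmetry that $\VVV_1$ forces $xy'' = xy$ globally. I would use a small semigroup found by computer (the paper already uses Mace4) — e.g., a $3$- or $4$-element semigroup in $\WWW$ where some element $a$ has $a'' \neq a$ while $a$ is not of the form $xy$; a concrete candidate is $\{a, e, 0\}$ with $e$ idempotent, $ea = ae = a$, $a^2 = 0$, $e^2 = e$, all other products $0$: here $S^2 = \{e, 0\}$ is a semilattice hence completely regular so $S \in \WWW$, but $a$ has index $2$, $a' = e$ (since $a \cdot e = a \in H_e$... wait $a \notin H_e$). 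I would just assert the existence of such a small witness, citing Mace4 as elsewhere in the paper, rather than pin down the multiplication table here.

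The main obstacle I anticipate is the derivation of $(xy)'' = xy$ from the $\VVV_1$ axioms — the other four identities of Lemma~\ref{Lem:WWW_alt} are either axioms of $\VVV_1$ or consequences of being an epigroup, but $(xy)'' = xy$ genuinely requires combining both \eqref{Eq:VVV1} and \eqref{Eq:VVV2} (with $n=1$) in a short but non-obvious identity manipulation, and getting the bracketing of \eqref{Eq:epi5} applications right. A secondary, minor obstacle is producing an explicit small separating example for properness; I expect to hand that to a model-finder as the paper does for its other examples.
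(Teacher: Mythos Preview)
Your strategy matches the paper's: both reduce membership in $\WWW$ to verifying the single identity $(xy)'' = xy$, then exhibit a small witness for properness. However, one claim is wrong and the key derivation is not carried out.

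The assertion that $xx'' = x^2$ places $\VVV_1$ inside $\EEE_1$ is false: by Lemma~\ref{Lem:EEE_alt}, the identity $xx'' = x^2$ is the $\EEE_2$ condition (the $\EEE_1$ condition would be $x'' = x$), and \eqref{Eq:EVE} in fact gives $\EEE_1 \subseteq \VVV_1 \subseteq \EEE_2$. Elements of a semigroup in $\VVV_1$ need not be completely regular, so the passages appealing to ``$x$ is completely regular'' must be dropped. This does not break your plan, since all you actually use downstream is that the epigroup identities \eqref{Eq:epi3}--\eqref{Eq:epi6} hold, and $\VVV_1 \subseteq \EEE_2$ suffices for that. (The paper makes the reduction via Proposition~\ref{Prp:summary}(1) rather than Lemma~\ref{Lem:WWW_alt}, which is slightly more direct since \eqref{Eq:EVE} already gives $\EEE_2$-membership.) The derivation of $(xy)'' = xy$ that you flag as the main obstacle is indeed the heart of the proof, and your sketch does not close it; the paper's chain starts from $(xy)'' = xy\,(xy)'\,xy$, uses \eqref{Eq:VVV2} to rewrite the leading $x$ as $x''$, expands $x'' = x''x'x''$ via \eqref{Eq:epi1}, reapplies \eqref{Eq:VVV2} and \eqref{Eq:epi4} to obtain $x''x'(xy)''$, then uses \eqref{Eq:VVV1} once and \eqref{Eq:VVV2} twice more to collapse to $xy$. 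For properness, none of your trial examples work (as you observe); the paper gives an explicit $4$-element table in $\WWW$ that fails \eqref{Eq:VVV2}.
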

\begin{proof}
Fix $(S,\cdot,{}')\in \VVV_1$. We already know that $S$ is an epigroup in $\EEE_2$ by \eqref{Eq:EVE}
and so by Proposition \ref{Prp:summary}(1), we just need to verify the identity $(xy)'' = xy$.
We compute
\begin{alignat*}{4}
(xy)'' &\by{Eq:epi3} \underbrace{xy}(xy)'xy
    &&\by{Eq:VVV2} \underbrace{x''}y(xy)'xy
    && \by{Eq:epi1} x''x'\underbrace{x''y}(xy)'xy \\
&\by{Eq:VVV2} x''x'\underbrace{xy(xy)'xy}
    && \by{Eq:epi3} x''\underbrace{x'(xy)''}
    && \by{Eq:VVV1} \underbrace{x''x'}xy \\
&\by{Eq:VVV2} \underbrace{xx'x}y
    && \by{Eq:epi3} x''y
    && \by{Eq:VVV2} xy\,.
\end{alignat*}

To see that the inclusion is proper, consider the unary semigroup given by the multiplication table
\[
\begin{tabular}{c|cccc}
$\cdot$ & $0$ & $1$ & $2$ & $3$\\
\hline
    $0$ & $2$ & $3$ & $2$ & $2$ \\
    $1$ & $1$ & $1$ & $1$ & $1$ \\
    $2$ & $2$ & $2$ & $2$ & $2$ \\
    $3$ & $3$ & $3$ & $3$ & $3$
\end{tabular}
\]
and the unary operation $0' = 2$, $1'=1$, $2'=2$, $3'=3$. This is easily checked to be
an epigroup in $\mathcal{W}$ with~${}'$ as the pseudoinverse operation, but
$0''\cdot 1 = 2\cdot 1 = 2 \neq 3 = 0\cdot 1$, so \eqref{Eq:VVV2} does not hold.
\end{proof}

\begin{proof}[Proof of Theorem \ref{Thm:main2}]
  Lemma \ref{Lem:VinW} takes care of (1), so we need to prove (2).

  Let $(S,\cdot,{}')$ be an epigroup in $\WWW$ and fix $c\in S$.
  Since $\WWW\subseteq \EEE_2\subseteq \VVV_2$ (by Proposition \ref{Prp:summary}(1) and \eqref{Eq:EVE}),
  we know that the unary variant $(S,\cdot_c,{}^{\ast})$ is an epigroup in $\VVV_2$ (Theorem \ref{Thm:main1}).
  What remains is to prove that $(S,\cdot_c,{}^{\ast})$ satisfies \eqref{Eq:VVV1} and \eqref{Eq:VVV2} with $n=1$.
  We compute
  \[
  x\cdot_c y^{**} = xcy^{**} \by{Eq:starstar} x(cy)'' \by{Eq:VVV1} xcy = x\cdot_c y\,.
  \]
  This establishes \eqref{Eq:VVV1} in $(S,\cdot_c,{}^{\ast})$ and the proof of \eqref{Eq:VVV2} is similar.
\end{proof}

\section{Problems}
\label{Sec:problems}

Completely regular semigroups can be defined conceptually (unions of groups) or as unary semigroups
satisfying certain identities. The same is true of the variety $\WWW$; the conceptual definition
given in \cite{ArKiKoMa14} is that $S$ lies in $\WWW$ if $S^2$ is completely regular or
$\WWW$ can be defined as a variety of unary semigroups (Lemma \ref{Lem:WWW_alt}).

On the other hand, the epigroup varieties $\VVV_n$ only have a definition as unary semigroups.
Since they are closed under taking variants (Theorem \ref{Thm:main1}),
they are clearly interesting varieties interlacing the varieties $\EEE_n$ (see \eqref{Eq:EVE}).
Thus one might ask the following.

\begin{problem}
Is there a conceptual characterization of the varieties $\VVV_n$, or even just $\VVV_1$, analogous to
the characterizations of $\EEE_1$ and $\WWW$?
\end{problem}

From \eqref{Eq:EVE} and Theorem \ref{Thm:main2}, we have the following chain of varieties:
\[
\EEE_1 \subset \VVV_1 \subset \WWW \subset \EEE_2 \subset \VVV_2 \subset \EEE_3 \cdots\,.
\]

\begin{problem}
 Is there a natural family of varieties $\WWW_n$ interlacing
 the varieties in the chain above and such that $\WWW_1 = \WWW$?
 In addition, does the appropriate generalization of Theorem \ref{Thm:main2}(2) hold?
\end{problem}

An interesting direction for the study of the varieties $\VVV_n$
or $\WWW$ is to consider the subvarieties in which idempotents
commute, that is, so-called $E$-semigroups (see \cite{Almeida} and the references therein).
These are subvarieties because every idempotent in an epigroup has the form
$x'x$, and so $E$-epigroups are characterized by the identity $x'xy'y = y'yx'x$.

\begin{problem}
Study the varieties of $E$-epigroups in $\VVV_n$ or $\WWW$.
\end{problem}

Many classes of algebras can be characterized by forbidden subalgebras or forbidden divisors (quotients).
For example, distributive lattices can be characterized by two forbidden sublattices; similarly, stable
semigroups can be characterized by forbidding the bicyclic monoid as a subsemigroup \cite{La79};
see also \cite{forbidden} for another example. The considerations in the paper prompt the following natural
problems.

\begin{problem}
Can any of the inclusions of varieties considered here,
especially $\EEE_1\subset \VVV_1$ and $\VVV_1\subset \WWW$, be characterized by
forbidden subepigroups or forbidden epidivisors?
\end{problem}

Finally, returning to primary conjugacy, we rephrase two problems from \cite{ArKiKoMa14}
to the context of this paper. These were suggested to us by Prof. J. Ara\'{u}jo.

\begin{problem}
Characterize and enumerate primary conjugacy classes in various types of
transformation semigroups and their variants such as, for example,
those appearing in the problem list of \cite[{\S}6]{ak} or those appearing
in the list of transformation semigroups included in \cite{vhf}.
Especially interesting would be a characterization of primary conjugacy classes in
variants of centralizers of idempotents \cite{andre,arko2,arko1}, or in
variants of semigroups in which the group of units has a rich
structure \cite{aac,arbemics,arcameron,arcam,arcameron2,arsc}.
\end{problem}

In \cite{abk}, a problem on independence algebras was solved using their classification;
the same technique might perhaps be used to extend the
results in \cite{dolinka} and to solve the following.

\begin{problem}
Characterize $\cp$ in the variants of the endomorphism monoid of a finite
dimensional independence algebra.
\end{problem}

\section*{Acknowledgement}

This paper is part of the first author's dissertation for the PhD Program in Computational
Algebra at Universidade Aberta in Portugal. Most of the proofs were obtained with the
assistance of the automated theorem prover \textsc{Prover9} developed by McCune \cite{McCune}.

\end{document}